\newtheorem{lemma}{Lemma}
\newtheorem{remark}[lemma]{Remark}
\newtheorem{theorem}[lemma]{Theorem}
\newtheorem{lem}[lemma]{Lemma}
\newtheorem{cor}[lemma]{Corollary}
\newtheorem{conj}[lemma]{Conjecture}
\newtheorem*{remark*}{Remark}
\def\qed{\hfill $\vcenter{\hrule height .3mm
\hbox {\vrule width .3mm height 2.1mm \kern 2mm \vrule width .3mm
height 2.1mm} \hrule height .3mm}$ \bigskip}
\def\RR{\mathbb{R}}
\def\vol{{\rm Vol}}
\def\conv{{\rm conv}}
\begin{document}
\title {A short note on Godbersen's Conjecture}
\date{}
\author{S. Artstein-Avidan\thanks{Supported by ISF}}
\maketitle
%

A convex body $K \subset \RR^n$  is a compact convex set with non-empty interior. 
For compact convex sets $K_1, \ldots,K_m \subset {\mathbb R}^n$, and non-negative real numbers $\lambda_1, \ldots,\lambda_m$, a classical result of Minkowski states that the volume of $\sum \lambda_i K_i$ is a homogeneous polynomial of degree $n$ in $\lambda_i$,
\begin{equation}\label{Eq_Vol-Def}
\vol \left(\sum_{i=1}^m \lambda_i K_i\right) = 
\sum_{i_1,\dots,i_n=1}^m \lambda_{i_1}\cdots\lambda_{i_n} V(K_{i_1},\dots,K_{i_n}). 
\end{equation}
The coefficient $V(K_{i_1},\dots,K_{i_n})$, which depends solely on $K_{i_1}, \ldots, K_{i_n}$, is called the mixed volume of $K_{i_1}, \ldots, K_{i_n}$.
The mixed volume is a non-negative,
translation invariant function, monotone with respect to set inclusion,
invariant under permutations of its arguments, 
and positively homogeneous in each argument. For $K$ and $L$ compact and convex, we denote $V(K[j], L[n-j])$ the mixed volume of $j$ compies of $K$ and $(n-j)$ copies of $L$. 
One has $V(K[n]) = \vol(K)$. By Alexandrov's inequality, $V(K[j],-K[n-j])\ge \vol(K)$, with equality if and only of $K= x_0-K$ for some   $x_0$, that is, some translation of $K$ is centrally symmetric. 
For further information on mixed volumes and their properties, see Section \textsection 5.1 of  \cite{Schneider-book}.

Recently, in the paper \cite{ArtsteinEinhornFlorentinOstrover}
we have shown that for any $\lambda \in [0,1]$ and for any convex body $K$ one has that 
\[ \lambda^j (1-\lambda)^{n-j} V(K[j], -K[n-k])\le {\vol(K)}. \]
In particular, picking $\lambda = \frac{j}{n}$, we get that 
\[V(K[j], -K[n-k])\le \frac{n^n}{j^j (n-j)^{n-j}}\vol(K)\sim \binom{n}{j} \sqrt{2\pi \frac{j(n-j)}{n}}. \]

The conjecture for the tight upper bound $\binom{n}{j}$, which is what ones get for a body which is an affine image of the simplex, 
was suggested
in 1938 by Godbersen \cite{Godbersen} (and independently by Hajnal and Makai Jr. \cite{Makai}).

\begin{conj}[Godbersen's conjecture]\label{conj:god}
For any convex body $K\subset \RR^n$ and any $1\le j\le n-1$,  
\begin{equation}\label{eq:Godbersen-conj} V(K[j], -K[n-j])\le \binom{n}{j} \vol(K),\end{equation}
with equality attained only for simplices. 
\end{conj}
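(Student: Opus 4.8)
The inequality \eqref{eq:Godbersen-conj} is invariant under invertible affine maps: both $V(K[j],-K[n-j])$ and $\vol(K)$ are multiplied by $|\det A|$ under $x\mapsto Ax$, both are translation invariant, and $-K$ transforms consistently. So it suffices to analyze the affine invariant ratio $\Phi_j(K)=V(K[j],-K[n-j])/\vol(K)$ and to prove $\sup_K \Phi_j(K)=\binom{n}{j}$, with the supremum attained exactly at simplices. Note that $\Phi_j=\Phi_{n-j}$ (apply $x\mapsto -x$), matching $\binom{n}{j}=\binom{n}{n-j}$, and that multiplying \eqref{eq:Godbersen-conj} by $\binom{n}{j}$ and summing, using $\vol(K-K)=\sum_{j}\binom{n}{j}V(K[j],-K[n-j])$ together with $\sum_j\binom{n}{j}^2=\binom{2n}{n}$, recovers the Rogers--Shephard inequality. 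Thus Godbersen is exactly a term-by-term sharpening of Rogers--Shephard, and the simplex should be the common extremizer.

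My plan is to prove that simplices maximize $\Phi_j$ by a shadow system (linear parameter) deformation. Recall Shephard's theorem: if $K^1_t,\dots,K^n_t$ are shadow systems along one common direction $e$, then $t\mapsto V(K^1_t,\dots,K^n_t)$ is convex. The key observation here is that if $K_t$ is a shadow system along $e$ then so is $-K_t$ (the chord speeds merely change sign, and ``direction $e$'' refers to the line, not its orientation); hence $t\mapsto V(K_t[j],-K_t[n-j])$ is convex. I would run \emph{volume preserving} shadow systems, i.e. parallel chord movements in which each chord of $K$ parallel to $e$ is translated rather than rescaled; by Fubini these keep $\vol(K_t)$ constant, so along such a movement $\Phi_j(K_t)$ is convex and therefore maximized at an endpoint of the admissible parameter interval. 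At such an endpoint the body degenerates (two vertices collide, a face flattens), strictly reducing combinatorial complexity. After first approximating $K$ by polytopes, I would iterate this reduction to bring the number of vertices down to $n+1$, i.e. to a simplex, and then verify by direct computation that $\Phi_j(\text{simplex})=\binom{n}{j}$.

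The hard part is the reduction step, not the convexity. One must exhibit, for an arbitrary polytope that is not a simplex, a volume preserving shadow system that does not decrease $\Phi_j$ and genuinely simplifies the body, and one must rule out non-simplex local maxima. The genuine difficulty is that $K$ and $-K$ move in opposite senses along $e$: a parameter value that flattens a face of $K_t$ need not simplify $-K_t$, so the numerator $V(K_t[j],-K_t[n-j])$ couples the two reflected copies and is not controlled by the geometry of $K$ alone. Reconciling the fixed denominator with this coupled numerator, and establishing the equality case, is exactly where the argument resists completion, and is, I expect, why the conjecture remains open. As a consistency check one can treat $j\in\{1,n-1\}$, where $V(K[n-1],-K)=\tfrac1n\int_{\sph}h_K(-u)\,dS_K(u)$ and the claim reduces to $\int_{\sph}h_K(-u)\,dS_K(u)\le n\int_{\sph}h_K(u)\,dS_K(u)$; and one should note that the bound of \cite{ArtsteinEinhornFlorentinOstrover} cannot by itself yield $\binom{n}{j}$, since it is not tight at the simplex (it overshoots by the factor $\sqrt{2\pi j(n-j)/n}$), so a mechanism that is sharp for the simplex---such as the shadow reduction above---appears necessary.
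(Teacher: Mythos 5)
The statement you were asked to address is Conjecture \ref{conj:god}, which is an open problem; the paper does not prove it and does not claim to. What the paper actually establishes is the strictly weaker averaged form (Theorem \ref{theorem-on-sum}), namely $\sum_{j=0}^n \lambda^j(1-\lambda)^{n-j}V(K[j],-K[n-j]) \le \vol(K)$ for all $\lambda\in[0,1]$, proved by applying the Rogers--Shephard section/projection inequality to a body $T\subset\RR^{2n+1}$ built from two weighted copies of $K$; this yields Godbersen only on average over $j$ (Corollary \ref{cor-AVERGAE-UNIFORM}) and, via Markov's inequality, up to a constant for a proportion of the indices. Your proposal is likewise not a proof, and to your credit you say so: the entire content of the conjecture is concentrated in the reduction step you leave open.

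Concretely, the gap is this. The convexity input is sound --- if $K_t$ is a shadow system along $e$ with speed $\alpha$, then $-K_t$ is a shadow system along the same line with speed $\beta(y)=-\alpha(-y)$, so Shephard's theorem does give convexity of $t\mapsto V(K_t[j],-K_t[n-j])$, and a volume-preserving parallel chord movement fixes the denominator. But ``convex, hence maximized at an endpoint'' does not by itself produce a descent to the simplex: you must exhibit, for every non-simplex polytope, a volume-preserving shadow system both of whose endpoints are combinatorially simpler, with the functional non-decreasing at one of them, and you must rule out non-simplex local maxima. Because the chords of $K_t$ and of $-K_t$ translate in opposite senses, the degeneration that simplifies $K_t$ at an endpoint generally does not simplify $-K_t$, and the mixed volume couples the two bodies; no such reduction scheme is known, which is precisely why the conjecture remains open. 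Your peripheral observations (affine invariance, $V_j=V_{n-j}$, recovery of Rogers--Shephard by summing against $\binom{n}{j}$, the $j=n-1$ reformulation as $\int_{\sph}h_K(-u)\,dS_K(u)\le n\int_{\sph}h_K(u)\,dS_K(u)$) are all correct, but the appropriate response to this statement was either to flag it as a conjecture or to prove what the paper actually proves --- the averaged inequality --- which your approach does not address.
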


We mention that Godbersen \cite{Godbersen} proved the conjecture for certain classes of convex bodies, in particular for those of constant width. We also mention that the conjecture holds for $j=1,n-1$ by the inclusion $K\subset n(-K)$ for bodies $K$ with center of mass at the origin, and inclusion which is tight for the simplex, see Schneider \cite{Schneider-simplex}. The bound from \cite{ArtsteinEinhornFlorentinOstrover} quoted above seems to be the currently smallest known upper bound for general $j$. 
 
In this short note we improve the aforementioned inequality and show 
\begin{theorem}\label{theorem-on-sum}
For any convex body $K\subset \RR^n$ and for any $\lambda \in [0,1]$ one has 
\[\sum_{j=0}^n \lambda^j (1-\lambda)^{n-j} V(K[j], -K[n-j])\le {\vol(K)}. \]
\end{theorem}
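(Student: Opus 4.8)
The plan is to recognize the left-hand side as (a multiple of) the volume of a single convex body in $\RR^{n+1}$, and then to bound that volume. First I would remove the binomial coefficients hidden in Minkowski's expansion \eqref{Eq_Vol-Def} by means of the Beta-integral identity $\frac{1}{\binom{n}{j}} = (n+1)\int_0^1 u^j(1-u)^{n-j}\,du$. Writing $\lambda^j(1-\lambda)^{n-j} = \binom{n}{j}\lambda^j(1-\lambda)^{n-j}\cdot\frac{1}{\binom nj}$, substituting this identity, and interchanging the (finite) sum with the integral, the sum collapses, via \eqref{Eq_Vol-Def} applied to the two bodies $K$ and $-K$, into
\[ \sum_{j=0}^n \lambda^j(1-\lambda)^{n-j}V(K[j],-K[n-j]) = (n+1)\int_0^1 \vol\big(\lambda u\,K + (1-\lambda)(1-u)(-K)\big)\,du. \]

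Next I would interpret the right-hand integral geometrically. Let $\Sigma = \conv\big((\{0\}\times (1-\lambda)(-K))\cup(\{1\}\times \lambda K)\big)\subset \RR\times\RR^n$. A short convexity check shows that the horizontal slice of $\Sigma$ at height $u\in[0,1]$ is exactly $\lambda u\,K + (1-\lambda)(1-u)(-K)$, so by Fubini the integral above equals $(n+1)\vol_{n+1}(\Sigma)$; equivalently one may record the clean identity $\vol_{n+1}(\conv((\{0\}\times B_0)\cup(\{1\}\times B_1))) = \frac{1}{n+1}\sum_{j=0}^n V(B_0[n-j],B_1[j])$ for any convex bodies $B_0,B_1$. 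Thus the theorem is equivalent to the sharp volume inequality
\[ \vol_{n+1}(\Sigma)\le \tfrac{1}{n+1}\vol_n(K), \]
whose right-hand side is the volume of a cone of height $1$ over $K$.

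I would then check the extremal case to calibrate constants: for a simplex $\Delta$ one has $V(\Delta[j],-\Delta[n-j]) = \binom nj\vol(\Delta)$, so the left-hand side of the theorem telescopes by the binomial theorem to $(\lambda+(1-\lambda))^n\vol(\Delta) = \vol(\Delta)$, i.e.\ equality holds for all $\lambda$. This also pins down the main obstacle. The naive termwise bound $V(K[j],-K[n-j])\le\binom nj\vol(K)$ would finish immediately, but this is exactly Godbersen's (open) conjecture; hence the integral must be controlled globally, through genuine cancellation between the heights $u$, rather than slice by slice. Indeed the two obvious tools point the wrong way: the Brunn--Minkowski concavity of $u\mapsto\vol_n(\text{slice})^{1/n}$ only lower-bounds $\vol_{n+1}(\Sigma)$, while a Rogers--Shephard support-volume estimate applied to each slice loses an exponential factor.

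The hard part, therefore, is to prove $\vol_{n+1}(\Sigma)\le\frac{1}{n+1}\vol_n(K)$ with the sharp constant and the simplex as extremizer. I would attack this as a one-sided Rogers--Shephard-type inequality — note that the target constant $\frac{1}{n+1} = \int_0^1(1-u)^n\,du$ has the one-apex (cone) shape rather than the symmetric shape $\int_0^1 u^n(1-u)^n\,du$ of the difference body — and pursue one of two routes. The first is a rearrangement/convolution estimate: represent $\vol_{n+1}(\Sigma)$ through the autocorrelation $h(x)=\vol(K\cap(K+x))$ (which is $\tfrac1n$-concave with $h(0)=\vol(K)$ and $\int h=\vol(K)^2$) and integrate it against the cone kernel so as to produce exactly the constant $\frac1{n+1}$, mimicking Rogers and Shephard's own simplex upper bound. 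The second is a shadow-system argument: as $K$ runs through a linear shadow system in $\RR^n$, the body $\Sigma$ is a shadow system in $\RR^{n+1}$ (every generating vertex translates linearly along a fixed horizontal direction), so $\vol_{n+1}(\Sigma)$ is convex along the system and its maximum is attained at the extreme, simplicial, configurations, where the computation above gives equality. Pinning down that the extremizer is a simplex, and thereby extracting the exact constant $1$, is the crux of the proof.
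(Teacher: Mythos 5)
Your reduction is correct and coincides exactly with the paper's: the Beta-integral computation identifying the left-hand side with $(n+1)\vol_{n+1}(\Sigma)$ for $\Sigma=\conv\bigl((\{0\}\times(1-\lambda)(-K))\cup(\{1\}\times\lambda K)\bigr)$ is the same slice-by-slice calculation used in the paper's proof of Theorem~\ref{theorem-on-sum}, and your simplex calibration is right. But the entire content of the theorem is the sharp volume bound $\vol_{n+1}(\Sigma)\le\frac{1}{n+1}\vol_n(K)$ (Lemma~\ref{lem:vol-of-C} in the paper), and there you stop: you name two candidate strategies and carry out neither, conceding that this step is ``the crux of the proof.'' That is a genuine gap rather than a routine verification. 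The convolution route, as you state it, supplies no concrete mechanism --- you correctly diagnose that slice-by-slice estimates lose an exponential factor, but ``integrate against the cone kernel'' does not explain where the required global cancellation comes from. The shadow-system route would need the full (standard but nontrivial) machinery of deforming an arbitrary body to a simplex through shadow systems while tracking $\Sigma$, together with the endpoint evaluations; convexity of the volume along a shadow system alone does not deliver an upper bound valid for every convex body.

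For the record, the paper closes this gap with a short argument of a different flavor: it applies the Rogers--Shephard section--projection inequality $\vol(P_{E^{\perp}}T)\,\vol(T\cap E)\le\binom{m}{j}\vol(T)$ (Lemma~\ref{lem-RSsec-proj}) to the lifted body $T=\conv\bigl(\{(0,0,y):y\in(1-\lambda)K\}\cup\{(1,x,-x):x\in\lambda K\}\bigr)\subset\RR^{2n+1}$. Its volume is computed exactly as $\frac{n!\,n!}{(2n+1)!}\lambda^n(1-\lambda)^n\vol(K)^2$ by direct integration; its projection onto $E^{\perp}$, where $E$ is the horizontal $n$-plane at height $\theta_0$, is precisely your $\Sigma$; and its section by $E$ at the judicious choice $\theta_0=1-\lambda$ is $\lambda(1-\lambda)K$. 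All $\lambda$-dependent factors then cancel, leaving exactly $\vol(\Sigma)\le\frac{1}{n+1}\vol(K)$. This lift to $\RR^{2n+1}$ is the precise realization of the global cancellation your first route gestures at; without it (or an executed substitute), your argument does not prove the theorem.
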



The proof of the inequality will go via the consideration of two bodies,  $C\subset \RR^{n+1}$ and  $T\subset \RR^{2n+1}$. Both were used in the paper of Rogers and Shephard \cite{RS}.

We shall show by imitating the methods of \cite{RS} that 
\begin{lemma}\label{lem:vol-of-C}
Given a convex body $K\subset \RR^n$ define $C\subset \RR\times \RR^n$ by 
\[ C = \conv (\{0\}\times(1-\lambda) K \cup \{1\}\times -\lambda K). \]
Then we have
\[ \vol(C)\le \frac{\vol(K)}{n+1}.\]
\end{lemma}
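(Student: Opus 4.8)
The plan is to bound $\vol(C)$ by imitating Rogers and Shephard's treatment of the difference body: I would realize $C$ as the image of a higher-dimensional convex body under a linear map and control the fiber volumes. First I would slice $C$ by the first coordinate. Writing $C_t$ for the section at height $t\in[0,1]$, one has $C_t=(1-t)(1-\lambda)K+t\lambda(-K)$, so that $\vol(C)=\int_0^1\vol_n(C_t)\,dt$ by Fubini. The point is that each $C_t$ is the Minkowski sum of the two factors of a \emph{product}; this suggests lifting to $\RR^{2n+1}=\RR\times\RR^n\times\RR^n$ and introducing the join
\[ T=\conv\big((\{0\}\times(1-\lambda)K\times\{0\})\ \cup\ (\{1\}\times\{0\}\times(-\lambda K))\big), \]
whose section at height $t$ is the \emph{product} $(1-t)(1-\lambda)K\times(-t\lambda K)$. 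Integrating these product volumes against the Beta integral $\int_0^1 t^n(1-t)^n\,dt=\tfrac{n!\,n!}{(2n+1)!}$ gives the exact value $\vol(T)=\tfrac{n!\,n!}{(2n+1)!}(1-\lambda)^n\lambda^n\vol(K)^2$.

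Next I would introduce the linear map $\Psi(t,x,b)=(t,x+b)$ from $\RR^{2n+1}$ onto $\RR^{n+1}$. It sends the height-$t$ section of $T$ onto $(1-t)(1-\lambda)K+(-t\lambda K)=C_t$, hence $\Psi(T)=C$, and its fibers are $n$-dimensional. The fiber over $(t,y)\in C$ is, in the $x$-coordinate, the intersection of two homothets,
\[ \Psi^{-1}(t,y)\cap T\ \cong\ (1-t)(1-\lambda)K\cap(y+t\lambda K), \]
whose $n$-volume I call $f(t,y)$, so that $\vol(T)=\int_C f$. Since $T$ is convex and $\Psi$ is linear, Brunn--Minkowski gives that $f^{1/n}$ is concave on $C$. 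This is exactly the setting of the Rogers--Shephard cone estimate: for a convex $T\subset\RR^{N}\times\RR^{k}$ projecting onto $C\subset\RR^N$ with $k$-dimensional fibers of volume $f$ such that $f^{1/k}$ is concave, comparing $f$ from its peak with the cone over $C$ yields $\vol_{N+k}(T)\ge\binom{N+k}{k}^{-1}(\max f)\,\vol_N(C)$. With $N=n+1$, $k=n$ this reads
\[ \vol(C)\le\binom{2n+1}{n}\frac{\vol(T)}{\max f}. \]

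It remains to compute $\max f$. For fixed $t$, maximizing over translations $y$ the overlap of the homothets $(1-t)(1-\lambda)K$ and $t\lambda K$ amounts to fitting the smaller inside the larger (possible since $sK+(1-s)p\subseteq K$ for any $p\in K$ and $s\in[0,1]$), giving $\max_y f(t,y)=\min\{(1-t)(1-\lambda),\,t\lambda\}^n\vol(K)$. The two linear functions cross at $t=1-\lambda$, where the common value is $\lambda(1-\lambda)$, so $\max f=\lambda^n(1-\lambda)^n\vol(K)$. Substituting $\vol(T)$ and $\max f$ into the displayed inequality, the factors $\lambda^n(1-\lambda)^n\vol(K)$ cancel and $\binom{2n+1}{n}\tfrac{n!\,n!}{(2n+1)!}=\tfrac1{n+1}$, yielding exactly $\vol(C)\le\vol(K)/(n+1)$. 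I expect the main obstacle to be locating the correct auxiliary body $T$ and the map $\Psi$ so that $\Psi(T)=C$ with fibers being intersections of homothets of $K$; once this is in place, the Brunn--Minkowski concavity and the Rogers--Shephard cone estimate are standard, and the only substantive computation is the maximal overlap, which is precisely where the crossing point $t=1-\lambda$ and the clean constant $\tfrac1{n+1}$ appear.
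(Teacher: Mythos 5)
Your proof is correct and follows essentially the same route as the paper: the same auxiliary body $T\subset\RR^{2n+1}$ (up to a shear of the last two coordinate blocks), the same value $\vol(T)=\frac{n!\,n!}{(2n+1)!}\lambda^n(1-\lambda)^n\vol(K)^2$, and the same critical height $t=1-\lambda$ where the two homothets of $K$ coincide and the factors cancel. The only difference is presentational: the paper invokes the Rogers--Shephard section--projection inequality (its Lemma \ref{lem-RSsec-proj}) as a black box applied to the section $T\cap E$ and the projection $P_{E^\perp}T=C$, whereas you re-derive that inequality inline via the $1/n$-concavity of the fiber-volume function $f$ and the cone estimate, which amounts to the same argument.
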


With this lemma in hand, we may prove our main claim by a simple computation 

\begin{proof}[Proof of Theorem \ref{theorem-on-sum}]
\begin{eqnarray*} \vol(C) &=& \int_{0}^{1} \vol((1-\eta)(1-\lambda)K - \eta \lambda K) d\eta \\ &=& 
 \sum_{j=0}^n \binom{n}{j}(1-\lambda)^{n-j}\lambda^j V(K[j],-K[n-j]) \int_{0}^{1}(1-\eta)^{n-j}\eta^j 
 d\eta\\
 & = & \frac{1}{n+1}\sum_{j=0}^n(1-\lambda)^{n-j}\lambda^j V(K[j],-K[n-j]).
\end{eqnarray*}
Thus, using Lemma \ref{lem:vol-of-C}, we have that 
\[ \sum_{j=0}^n(1-\lambda)^{n-j}\lambda^j V(K[j],-K[n-j])\le \vol(K).\]
\end{proof} 

Before turning to the proof of Lemma \ref{lem:vol-of-C} let us state a few consequences of Theorem \ref{theorem-on-sum}. First, integration with respect to the parameter $\lambda$ yields
\begin{cor}\label{cor-AVERGAE-UNIFORM} For any convex body $K\subset \RR^n$
\[ \frac{1}{n+1}
\sum_{j=0}^n \frac{V(K[j],-K[n-j])}{\binom{n}{j}}\le \vol(K),\] which can be rewritten as 
\[ \frac{1}{n-1}
\sum_{j=1}^{n-1} \frac{V(K[j],-K[n-j])}{\binom{n}{j}}\le \vol(K).\]
\end{cor}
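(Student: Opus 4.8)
The plan is to derive the corollary directly from Theorem \ref{theorem-on-sum} by integrating its inequality over the whole range of the parameter. For every fixed $\lambda \in [0,1]$ the bound
\[\sum_{j=0}^n \lambda^j (1-\lambda)^{n-j} V(K[j], -K[n-j]) \le \vol(K)\]
holds, so I would integrate both sides against $d\lambda$ over $[0,1]$. The right-hand side is constant in $\lambda$ and simply returns $\vol(K)$. On the left the sum is finite, so summation and integration may be interchanged with no convergence concerns, and the entire matter reduces to evaluating the scalar integrals $\int_0^1 \lambda^j (1-\lambda)^{n-j}\, d\lambda$.

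The key computation is the Euler Beta integral
\[\int_0^1 \lambda^j (1-\lambda)^{n-j}\, d\lambda = \frac{j!\,(n-j)!}{(n+1)!} = \frac{1}{(n+1)\binom{n}{j}},\]
where the last step uses $\frac{j!\,(n-j)!}{(n+1)!} = \frac{1}{n+1}\cdot\frac{j!\,(n-j)!}{n!}$. Inserting this weight into the integrated sum yields exactly
\[\frac{1}{n+1} \sum_{j=0}^n \frac{V(K[j], -K[n-j])}{\binom{n}{j}} \le \vol(K),\]
which is the first form of the corollary.

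For the second, equivalent, form I would isolate the two boundary terms. Since the mixed volume reduces to ordinary volume when all arguments coincide, and $\vol(-K) = \vol(K)$ while $\binom{n}{0} = \binom{n}{n} = 1$, the summands at $j=0$ and $j=n$ each equal $\vol(K)$. Pulling these out contributes $\tfrac{2}{n+1}\vol(K)$; subtracting them and multiplying through by $n+1$ converts the inequality into $\sum_{j=1}^{n-1} \frac{V(K[j], -K[n-j])}{\binom{n}{j}} \le (n-1)\vol(K)$, and dividing by $n-1$ gives the stated rewriting.

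I do not expect any genuine obstacle: the argument is a single integration followed by a Beta-function evaluation and elementary bookkeeping of the endpoint terms. The only point needing a touch of care is the passage to the second form, where one must correctly account for the two boundary contributions and check that the right-hand coefficient collapses from $n+1$ to $n-1$.
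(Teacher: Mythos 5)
Your proposal is correct and follows exactly the paper's route: the paper obtains the corollary by integrating the inequality of Theorem \ref{theorem-on-sum} over $\lambda\in[0,1]$ and evaluating the Beta integral $\int_0^1\lambda^j(1-\lambda)^{n-j}\,d\lambda=\frac{1}{(n+1)\binom{n}{j}}$, and the passage to the second form by extracting the $j=0$ and $j=n$ terms (each equal to $\vol(K)$) is the intended elementary bookkeeping.
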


So, on average the Godbersen conjecture is true. 
Of course, the fact that it holds true on average was known before, but with a different kind of average. Indeed, the Rogers-Shephard inequality for the difference body, which is 
\[ \vol(K-K)\le \binom{2n}{n} \vol(K)\] 
(see for example \cite{Schneider-book} or \cite{AGMbook}) can be rewritten as
\[ \frac{1}{\binom{2n}{n}} \sum_{j=0}^n  {\binom{n}{j}} V(K[j], -K[n-j]) \le \vol(K).\]

However, our new average, in Corollary \ref{cor-AVERGAE-UNIFORM} is a uniform one, so we know for instance that the median of the sequence $( {\binom{n}{j}}^{-1}{V(K[j], -K[n-j])})_{j=1}^{n-1}$ is less than two, so that at least for one half of the indices $j=1,2,\ldots, n-1$, the mixed volumes satisfy Godbersen's conjecture up to factor $2$. More generally,  apply Markov's inequality for the uniform measure on $\{1, \ldots, n-1\}$ to get 

\begin{cor}
	Let $K\subset \RR^n$ be a convex body with $\vol(K)=1$. For at least $k$ of the indices $j=1,2,\ldots n-1$ it holds that \[  
	V(K[j],-K[n-j]) \le \frac{n-1}{n-k} \binom{n}{j}.\]
\end{cor}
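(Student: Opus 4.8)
The plan is to deduce this directly from the uniform-average bound in Corollary \ref{cor-AVERGAE-UNIFORM} by a first-moment (Markov) argument. Normalizing $\vol(K)=1$ and abbreviating $a_j = V(K[j],-K[n-j])/\binom{n}{j}$ for $j=1,\dots,n-1$, Corollary \ref{cor-AVERGAE-UNIFORM} says precisely that $\frac{1}{n-1}\sum_{j=1}^{n-1} a_j \le 1$, that is, $\sum_{j=1}^{n-1} a_j \le n-1$. Since mixed volumes are non-negative, each $a_j \ge 0$, so the $a_j$ form a non-negative function on the uniform probability space $\{1,\dots,n-1\}$ with mean at most $1$, and the desired conclusion is the statement that at least $k$ of the $a_j$ are at most $\frac{n-1}{n-k}$.

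Next I would fix the threshold $t = \frac{n-1}{n-k}$ (which is positive since $1\le k\le n-1$, so $n-k\ge 1$) and estimate, in the manner of Markov's inequality, the number $N$ of indices that \emph{violate} the target bound, namely those $j$ with $a_j > t$. The whole content is then to convert this tail estimate into an integer count with the sharp constant.

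The key step, and the only point requiring care, is to use the \emph{strict} inequality defining the violating indices together with integrality of $N$. Each of the $N$ violating indices contributes more than $t$ to the sum, and all terms are non-negative, so
\[ n-1 \ge \sum_{j=1}^{n-1} a_j \ge \sum_{j:\, a_j > t} a_j > N t = N\cdot \frac{n-1}{n-k}, \]
whence $N < n-k$. Because $N$ is an integer this forces $N \le n-k-1$, and therefore the number of indices with $a_j \le t$ is at least $(n-1)-(n-k-1) = k$. Unwinding the definition of $a_j$, for at least $k$ values of $j$ we have $V(K[j],-K[n-j]) \le \frac{n-1}{n-k}\binom{n}{j}$, as claimed.

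I expect no genuine obstacle, since all the analytic content sits in Corollary \ref{cor-AVERGAE-UNIFORM}; the corollary is a purely combinatorial consequence. The one subtlety worth flagging is the off-by-one: a naive application of Markov with the non-strict event $\{a_j \ge t\}$ would only guarantee $k-1$ good indices, so it is essential to count the bad indices via the strict inequality $a_j > t$, which upgrades $N < n-k$ to $N \le n-k-1$ by integrality and yields exactly $k$.
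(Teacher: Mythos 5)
Your argument is correct and is exactly the paper's (unwritten) proof: the paper simply says to apply Markov's inequality for the uniform measure on $\{1,\dots,n-1\}$ to Corollary \ref{cor-AVERGAE-UNIFORM}, and you carry this out, correctly handling the off-by-one issue by counting the strictly violating indices and using integrality. The only cosmetic quibble is that the displayed strict inequality $\sum_{j:\,a_j>t}a_j > Nt$ degenerates to $0>0$ when $N=0$, but in that case the conclusion is trivial, so nothing is lost.
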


We mention that   the inequality of Theorem \ref{theorem-on-sum} can be reformulated, for $K$ with $\vol(K)=1$, say,  as  
\[\sum_{j=1}^{n-1} \lambda^{j-1} (1-\lambda)^{n-j-1} [V(K[j], -K[n-j])-\binom{n}{j}] \le 0\]
So that by taking $\lambda=0,1$ we see, once again, that 
 $V(K, -K[n-1]) = V(K[n-1],-K) \le n$.

%
%
%
%
%

A key ingredient in the proof of Lemma \ref{lem:vol-of-C} is Rogers-Shephard inequality for sections and projections from \cite{RS}, which states that 

\begin{lem}[Rogers and Shephard]\label{lem-RSsec-proj}
	Let $T\subset \RR^m$ be a convex body, let $E\subset \RR^m$ be a subspace of dimension $j$. Then
	\[ \vol(P_{E^{\perp}}T)\vol(T\cap E) \le \binom{m}{j}\vol(T), \]
	where $P_{E^{\perp}} $ denotes the projection operator onto $E^{\perp}$.
\end{lem}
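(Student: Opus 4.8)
The plan is to follow the fibration argument of Rogers and Shephard. Writing $\RR^m = E \oplus E^\perp$, I decompose $T$ into its fibers over the projection $B := P_{E^\perp}T \subset E^\perp$: for $z \in B$ set $T_z = \{u \in E : u + z \in T\}$, a convex body in $E$. By Fubini's theorem $\vol(T) = \int_B \vol(T_z)\,dz$, where the outer integral is $(m-j)$-dimensional and the fibers are $j$-dimensional. The central fiber is exactly the section in question, $T_0 = T\cap E$, so that $\vol(T\cap E) = \vol(T_0)$; this presumes $T \cap E \neq \emptyset$, but otherwise the left-hand side vanishes and there is nothing to prove.

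The key geometric input is a one-parameter inclusion coming from convexity of $T$: for every $z \in B$ and every $\lambda \in [0,1]$,
\[ (1-\lambda) T_0 + \lambda T_z \subseteq T_{\lambda z}, \]
since if $(u,0),(w,z) \in T$ then their convex combination $((1-\lambda)u + \lambda w,\ \lambda z)$ lies in $T$. Taking any fixed point $w_0 \in T_z$ gives the translate $(1-\lambda)T_0 + \lambda w_0 \subseteq T_{\lambda z}$, whence the pointwise bound $\vol(T_{\lambda z}) \ge (1-\lambda)^j \vol(T_0) = (1-\lambda)^j \vol(T\cap E)$. In words, as one travels along the segment from $0$ to $z$ inside $B$, the fiber volume decays no faster than $(1-\lambda)^j$.

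It remains to integrate this bound over $B$. Since $0 \in B$ and $B$ is convex, hence star-shaped about $0$, I pass to polar coordinates in $E^\perp$: writing $z = s\theta$ with $\theta \in S^{m-j-1}$ and $0 \le s \le r(\theta)$ (so that $r(\theta)\theta \in \partial B$), and substituting $s = \lambda\, r(\theta)$, the bound above yields
\[ \vol(T) \ge \vol(T\cap E)\int_{S^{m-j-1}} r(\theta)^{m-j}\left(\int_0^1 (1-\lambda)^j \lambda^{m-j-1}\,d\lambda\right)d\theta. \]
The inner integral is the Beta integral $\tfrac{j!\,(m-j-1)!}{m!}$, while $\int_{S^{m-j-1}} r(\theta)^{m-j}\,d\theta = (m-j)\vol(B)$ is the polar formula for $\vol(P_{E^\perp}T)$. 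Collecting constants gives $\tfrac{(m-j)\,j!\,(m-j-1)!}{m!} = \binom{m}{j}^{-1}$, which is exactly the asserted inequality.

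I expect the only delicate point to be the position of the origin. The argument uses $0 \in B$ together with the polar decomposition about $0$, so I would first dispose of the trivial case $T \cap E = \emptyset$ and, if needed, reduce to the case where $0$ lies in the relative interior of $B$ by a routine approximation, ensuring $r(\theta)$ is well defined and positive for almost every $\theta$. The heart of the matter is entirely the convexity inclusion $(1-\lambda)T_0 + \lambda T_z \subseteq T_{\lambda z}$; everything after it is the bookkeeping that converts this decay estimate into the binomial constant $\binom{m}{j}$.
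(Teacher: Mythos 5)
Your proof is correct. The paper does not prove this lemma at all --- it is quoted from Rogers and Shephard's paper \cite{RS} --- and your argument (the convexity inclusion $(1-\lambda)T_0+\lambda T_z\subseteq T_{\lambda z}$, the resulting decay bound $\vol(T_{\lambda z})\ge(1-\lambda)^j\vol(T\cap E)$, and the polar-coordinate integration producing the Beta integral and the constant $\binom{m}{j}^{-1}$) is precisely the classical argument from that source, with the degenerate cases ($T\cap E=\emptyset$, origin on the boundary of the projection) handled appropriately.
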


We turn to the proof of Lemma \ref{lem:vol-of-C} regarding the volume of $C$.

\begin{proof}[Proof of Lemma \ref{lem:vol-of-C}] 
We borrow directly the method of \cite{RS}. Let $K_1,K_2\subset \RR^n$ be convex bodies, we shall consider $T\subset \RR^{2n+1}= \RR \times \RR^n \times \RR^n$ defined by  
\[ T = \conv (\{(0,0,y); y\in K_2\} \cup \{ (1,x,-x): x\in K_1\}). \]  
Written out in coordinates this is simply 
\begin{eqnarray*}  T &=& \{ (\theta, \theta x , -\theta x + (1-\theta)y ): x\in K_1, y\in K_2\}\\
& = & \{ (\theta, w, z): w\in \theta K_1, z+w\in (1-\theta)K_2\}. \end{eqnarray*}
The volume of $T$ is thus, by simple integration, equal to 
\[ \vol(T) = \vol(K_1) \vol(K_2) 
\int_0^1 \theta^n (1-\theta)^n d\theta = \frac{n!n!}{(2n+1)!}\vol(K_1) \vol(K_2). \] 
We now take the section of $T$ by the $n$ dimensional affine subspace 
\[ E = \{(\theta_0, x, 0): x\in \RR^n\}\] and project it onto the complement $E^\perp$. We get for the section: 
\[ T\cap E = \{ (\theta_0, x, 0): x\in \theta_0 K_1 \cap (1-\theta_0)K_2\}\] 
and so $\vol_n(T\cap E) = \vol(\theta_0 K_1 \cap (1-\theta_0) K_2 )$. As for the projection, we get 
\begin{eqnarray*} P_{E^{\perp}}T &=& \{ (\theta, 0, y): \exists x {\rm ~with~} (\theta, x, y)\in T \}\\
& = &  \{ (\theta, 0, y):  \theta K_1 \cap ((1-\theta)K_2-y) \}\\
& = &  \{ (\theta, 0, y):  y\in  (1-\theta)K_2-\theta K_1 \}.\end{eqnarray*}
Thus $\vol_n(P_{E^\perp}T)= \vol((\theta, y):  y\in  (1-\theta)K_2-\theta K_1)$ which is precisely a set of the type we considered before in $\RR^{n+1}$. In fact, putting instead of $K_1$ the set $\lambda K$ and instead of $K_2$ the set $(1-\lambda)K$ we get that $P_{E^\perp}T = C$.

Staying with our original $K_1$ and $K_2$, and using the Rogers-Shephard Lemma \ref{lem-RSsec-proj} bound for sections and projections, we see that 
\[ \vol(P_{E^{\perp}}T)\vol(T\cap E) \le \binom{2n+1}{n}\vol(T), \]
which translates,  to the following inequality   
\[ \vol (\conv (\{0\}\times K_2 \cup \{1\}\times (-K_1))) \le \frac{1}{n+1} \frac{\vol(K_1)\vol(K_2)}{\vol(\theta_0 K_1 \cap (1-\theta_0)K_2)}. \]

We mention that this exact same construction was preformed and analysed by Rogers and Shephard for the special choice $\theta_0 = 1/2$, which is optimal if $K_1 = K_2$. 

For our special choice of $K_2 = (1-\lambda)K$ and $K_1 = \lambda K$ we pick $\theta_0 = (1-\lambda)$ so that the intersection in question is simply $\lambda (1-\lambda) K$, which cancels out when we compute the volumes in the numerator. We end up with 
\[ \vol (\conv (\{0\}\times (1-\lambda)K \cup \{1\}\times (-\lambda K))) \le \frac{1}{n+1} \vol(K),  \] 
which was the statement of the lemma. 
\end{proof}

Our next assertion is connected with the following conjecture regarding the unbalanced difference body 
\[ D_\lambda K = (1-\lambda)K + \lambda(-K).\]

\begin{conj}\label{unbalancedRS}
	For any $\lambda\in (0,1)$ one has
	\[ \frac{\vol(D_\lambda K)}{\vol(K)} \le  \frac{\vol(D_\lambda \Delta )}{\vol(\Delta )} \]
	where $\Delta$ is an $n$-dimensional simplex. 
\end{conj}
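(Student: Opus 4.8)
\noindent
The plan is to fix $\lambda$ and to maximize the affine-invariant ratio $R(K):=\vol(D_\lambda K)/\vol(K)$ over all convex bodies, showing that the maximum is attained exactly at simplices. Two preliminary reductions help. First, since $D_{1-\lambda}K=\lambda K+(1-\lambda)(-K)=-D_\lambda K$, the ratio $R$ is invariant under $\lambda\mapsto 1-\lambda$, and so is the right-hand side, because $\vol(D_\lambda\Delta)/\vol(\Delta)=\sum_{j=0}^n\binom{n}{j}^2(1-\lambda)^{n-j}\lambda^{j}$ is palindromic in $\lambda$; hence we may assume $\lambda\in(0,\tfrac12]$. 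Second, expanding by Minkowski's theorem and setting $a_j:=V(K[n-j],-K[j])/\vol(K)$, the conjecture becomes
\[ \sum_{j=1}^{n-1}\binom{n}{j}(1-\lambda)^{n-j}\lambda^{j}\Big(\binom{n}{j}-a_{j}\Big)\ge 0, \]
which displays it as a $\lambda$-weighted average of the Godbersen inequalities $a_j\le\binom{n}{j}$. This makes clear both that the statement follows from, and is strictly weaker than, Godbersen's conjecture, and that an unconditional proof cannot proceed term by term; one must instead control the averaged functional $\vol(D_\lambda K)$ as a whole.

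The main engine I would use is the method of shadow systems, in the spirit of Shephard and of Campi--Gronchi. Let $K_t$, with $t$ in an interval, be a shadow system of convex bodies along a fixed direction $e$. Then $-K_t$ is again a shadow system along $e$, and the class of shadow systems along a common direction is closed under Minkowski addition; consequently $D_\lambda K_t=(1-\lambda)K_t+\lambda(-K_t)$ is itself a shadow system along $e$, so that $t\mapsto\vol(D_\lambda K_t)$ is convex. I would run this along volume-preserving motions, for which $\vol(K_t)$ stays constant, so that $t\mapsto R(K_t)$ is convex and therefore attains its maximum at an endpoint. The aim is then to start from an arbitrary body and, by a sequence of such motions each non-decreasing $R$, drive it to an extreme element of the fixed-volume class.

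The decisive step, and the one I expect to be the main obstacle, is to prove that these extreme elements are precisely the simplices, so that $R$ is maximized only there. This is exactly where a naive term-by-term program would stall: the same shadow-system convexity applies to each individual mixed volume $V(K_t[n-j],-K_t[j])$, yet pinning down the simplex as the unique extremizer of a single such mixed volume is open. The hope specific to the present conjecture is that the \emph{averaged} functional $\vol(D_\lambda K)$, being an honest volume of a Minkowski combination rather than a bare mixed volume, enjoys a rigidity (for instance, its second-order behaviour along shadow systems degenerating only for simplices) that the individual mixed volumes do not. Quantifying this rigidity, and coupling it with a compactness argument (say in John's position, via Blaschke selection) to guarantee that a maximizer exists and can be joined to a simplex without lowering $R$, is where the genuine difficulty lies.

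As a consistency check I would verify that the method reproduces the two cases already understood: at $\lambda=\tfrac12$ the functional is $2^{-n}\vol(K-K)$ and the statement is exactly the Rogers--Shephard inequality, with its classical simplex extremizer; and in the plane the conjecture collapses to $a_1\le 2$, i.e.\ the known bound $V(K,-K)\le 2\vol(K)$. Recovering these limiting cases is a useful way to confirm that the normalization of the shadow systems and the equality analysis have been set up correctly before attacking general $n$ and $\lambda$.
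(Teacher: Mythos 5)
The statement you are addressing is labelled a \emph{conjecture} in the paper, and the paper does not prove it in general: it only observes that it follows from Godbersen's conjecture, that it holds at $\lambda=0,\tfrac12,1$ and on average over $\lambda$, and then proves it for $n=4,5$ by an elementary linear-algebra argument (writing the target inequality as a non-negative combination $a\cdot(\ast)+b\cdot(\ast\ast)$ of the two known averaged inequalities --- the $\lambda$-weighted sum bound of Theorem \ref{theorem-on-sum} and the Rogers--Shephard difference-body inequality --- and checking via a $2\times2$ system that $a,b\ge0$). Your reformulation of the conjecture as $\sum_{j=1}^{n-1}\binom{n}{j}\lambda^j(1-\lambda)^{n-j}\bigl(\binom{n}{j}-a_j\bigr)\ge0$ and the symmetry reduction to $\lambda\in(0,\tfrac12]$ are both correct and consistent with the paper's display of the conjecture as a weighted average of Godbersen's inequalities.

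However, your proposal is not a proof; it is a research programme with a gap that you yourself identify. The shadow-system preliminaries are sound: if $K_t$ is a shadow system along $e$, then $-K_t$ is one as well, Minkowski combinations of shadow systems along a common direction are shadow systems, and volume (indeed every mixed volume) is convex in $t$, so $t\mapsto\vol(D_\lambda K_t)$ is convex and $R$ is maximized at endpoints of any volume-preserving motion. But the decisive step --- showing that iterating such motions drives an arbitrary body to a simplex without decreasing $R$, and that simplices are the only bodies surviving this process --- is precisely the open heart of the matter, and no mechanism is offered for it. The hoped-for ``rigidity'' of the averaged functional $\vol(D_\lambda K)$ over the individual mixed volumes is asserted, not established; note that even for the single case $\lambda=\tfrac12$, the Rogers--Shephard inequality is classically proved not by shadow systems but by the section--projection construction in $\RR^{2n}$ (the same construction the paper exploits for Lemma \ref{lem:vol-of-C}), and a shadow-system derivation of simplex extremality there is itself nontrivial. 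If you want an unconditional result along the paper's lines, the productive move is the one the paper makes: treat the known inequalities on the vector $(V_1,\dots,V_{\lfloor n/2\rfloor})$ as linear constraints and ask when the target inequality lies in their non-negative cone; this settles $n\le5$ but visibly cannot reach all $n$ with only two constraints, which is why the general statement remains a conjecture.
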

Reformulating,  Conjecture \ref{unbalancedRS} asks whether the following inequality  holds
\begin{eqnarray}
\sum_{j=0}^n \binom{n}{j} \lambda^j (1-\lambda)^{n-j} V_j \le \sum_{j=0}^n \binom{n}{j}^2 \lambda^j (1-\lambda)^{n-j},  
\end{eqnarray}
where we have denoted $V_j= V(K [j], -K[n-j])/\vol(K)$. 

Clearly Conjecture \ref{unbalancedRS} follows from Godbersen's conjecture. Conjecture \ref{unbalancedRS} holds for $\lambda = 1/2$ by the Rogers-Shephard difference body inequality, it holds for $\lambda = 0,1$ as then both sides are $1$, and it holds on average  over $\lambda$ by Lemma \ref{lem:vol-of-C} (one should apply Lemma \ref{lem:vol-of-C} for the body $2K$ with $\lambda_0 = 1/2$).  
 We rewrite two of the inequalities that we know on the sequence $V_j$: 
\begin{eqnarray}
\sum_{j=0}^n \lambda^j (1-\lambda)^{n-j} V_j \le \sum_{j=0}^n \binom{n}{j}  \lambda^j (1-\lambda)^{n-j}. 
\end{eqnarray}\begin{eqnarray}
\sum_{j=0}^n \binom{n}{j}   V_j \le \sum_{j=0}^n \binom{n}{j}^2 . 
\end{eqnarray}
In all  inequalities we may disregard the $0^{th}$ and $n^{th}$ terms as they are equal on both sides. We may take advantage of the fact that the $j^{th}$ and the $(n-j)^{th}$ terms are the same in each inequality, and  sum only up to $(n/2)$ (but be careful, if $n$ is odd then each term appears twice, and if $n$ is even then the $(n/2)^{th}$ term appears only once).

\begin{theorem}
	For $n=4,5$ Conjecture \ref{unbalancedRS} holds. 
\end{theorem}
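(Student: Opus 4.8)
The plan is to pass to the normalized deviations $w_j := V_j - \binom{n}{j}$, so that $w_0 = w_n = 0$ (since $V_0 = V_n = 1$) and, because the mixed volume is unchanged when $K \mapsto -K$ is applied to all its arguments, $w_j = w_{n-j}$. In these variables the reformulation of Conjecture \ref{unbalancedRS} becomes the requirement that $F_\lambda := \sum_{j=0}^n \binom{n}{j}\lambda^j(1-\lambda)^{n-j}w_j \le 0$ for all $\lambda\in[0,1]$. I would use only two facts already available in the excerpt: first, $w_1 = w_{n-1}\le 0$, which is Godbersen's conjecture for $j=1,n-1$ via the inclusion $K\subset n(-K)$, equivalently the endpoint values $\lambda=0,1$ of Theorem \ref{theorem-on-sum}; and second, the Rogers--Shephard difference-body inequality, which in these variables reads $\sum_{j=1}^{n-1}\binom{n}{j}w_j \le 0$.

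First I would symmetrize as the excerpt suggests, grouping the $j$-th and $(n-j)$-th terms and writing everything through $s := \lambda(1-\lambda)\in(0,1/4]$, using $(1-\lambda)^2+\lambda^2 = 1-2s$ and $(1-\lambda)^3+\lambda^3 = 1-3s$. After factoring out the nonnegative $s$, the inequality $F_\lambda\le 0$ collapses to a one-parameter family of linear inequalities in the two free quantities $w_1$ and $w_2$: for $n=4$,
\[ 4(1-2s)\,w_1 + 6s\,w_2 \le 0, \]
and for $n=5$,
\[ 5(1-3s)\,w_1 + 10s\,w_2 \le 0, \]
to be verified for every $s\in(0,1/4]$. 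The symmetry $w_j=w_{n-j}$ is exactly what makes $w_1,w_2$ the only unknowns: for $n=4$ one has $w_3=w_1$, and for $n=5$ one has $w_4=w_1$ and $w_3=w_2$.

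The key step is then to write the goal functional, for each fixed $s$, as an explicit nonnegative combination of the two available inequalities. The Rogers--Shephard functional is $8w_1+6w_2$ for $n=4$ and $10w_1+20w_2$ for $n=5$, and a direct check gives the identities
\[ 4(1-2s)\,w_1 + 6s\,w_2 = 4(1-4s)\,w_1 + s\,(8w_1+6w_2), \]
\[ 5(1-3s)\,w_1 + 10s\,w_2 = 5(1-4s)\,w_1 + \tfrac{s}{2}\,(10w_1+20w_2). \]
Since $s\le 1/4$ we have $1-4s\ge 0$, so in each identity the first summand is a nonnegative multiple of $w_1\le 0$ and the second a nonnegative multiple of the Rogers--Shephard expression $\le 0$; hence both left-hand sides are $\le 0$ for all $s\in(0,1/4]$, which is precisely Conjecture \ref{unbalancedRS} for $n=4,5$. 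Note that the coefficient $1-4s$ vanishes at $s=1/4$, i.e. at $\lambda=1/2$, where the conjecture is exactly the Rogers--Shephard inequality and the sign of $w_1$ plays no role.

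The one genuinely delicate point is the verification that the goal functional lies in the cone generated by $\{w_1\le 0\}$ and the Rogers--Shephard inequality, and this is what forces the restriction to $n\le 5$. For these $n$ the reflection symmetry leaves only the two unknowns $w_1,w_2$, and the two scalar inequalities above suffice to dominate the whole one-parameter family. For $n\ge 6$, symmetrization leaves at least three independent unknowns --- including the central deviation $w_{n/2}$ for $n=6$, or $w_2,w_3$ for $n=7$, which Godbersen does not control at the endpoints --- while one still has essentially the single scalar Rogers--Shephard constraint together with $w_1\le 0$. The one-parameter family of goal functionals then sweeps out directions outside the finitely generated cone of known inequalities, so the elementary nonnegative-combination argument breaks down, and one would need to bring in the full continuum of inequalities from Theorem \ref{theorem-on-sum}, or an entirely new estimate.
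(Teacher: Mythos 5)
Your proof is correct, and it takes a genuinely different---and more elementary---route than the paper. I checked the symmetrization: with $s=\lambda(1-\lambda)\in(0,1/4]$ the goal indeed reduces to $4(1-2s)w_1+6sw_2\le 0$ for $n=4$ and $5(1-3s)w_1+10sw_2\le 0$ for $n=5$, the Rogers--Shephard functionals are $8w_1+6w_2$ and $10w_1+20w_2$, and both of your decomposition identities expand correctly, with coefficients $4(1-4s)$ and $s$ (resp.\ $5(1-4s)$ and $s/2$) that are nonnegative precisely because $s\le 1/4$. The paper instead keeps the full one-parameter inequality of Theorem \ref{theorem-on-sum} evaluated at the same $\lambda$ as one of its two generators, sets up a $2\times 2$ linear system for the combination coefficients $a,b$, and verifies positivity by computing a determinant and inverting the matrix; your choice of generators---$w_1\le 0$ (the classical $j=1,n-1$ case of Godbersen via $K\subset n(-K)$) together with Rogers--Shephard---replaces that inversion with a one-line identity. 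This also exposes something the paper's argument obscures: the $n=4,5$ cases of Conjecture \ref{unbalancedRS} already follow from classical inputs alone, without the paper's new Theorem \ref{theorem-on-sum}. What the paper's version buys is a template for combining the full continuum of inequalities from Theorem \ref{theorem-on-sum} with Rogers--Shephard, which is the natural thing to try for larger $n$, and your closing discussion of why the two-generator cone fails for $n\ge 6$ is consistent with the paper stopping at $n=5$. One pedantic remark: Theorem \ref{theorem-on-sum} at $\lambda=0,1$ literally yields only the trivial $V_0\le 1$; extracting $w_1\le 0$ requires first dividing by $\lambda(1-\lambda)$ and passing to the limit, as the paper does in its reformulation---but since $w_1\le 0$ is classical anyway, nothing is at stake.
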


\begin{proof}
	For $n=4$ We have that $V_0 = V_4 = 1$ and $V_1 = V_3$. We thus know that 
	\[ 8V_1+6V_2 \le 32+36\] 
	and that for any $\lambda \in [0,1]$ we have 
	\[ (\lambda^3(1-\lambda)+\lambda(1-\lambda)^3)V_1 + \lambda^2 (1-\lambda)^2V_2 \le 
	4(\lambda^3(1-\lambda)+\lambda(1-\lambda)^3)  + 6\lambda^2 (1-\lambda)^2. \]
	We need to prove that 
	\[ 4(\lambda^3(1-\lambda)+\lambda(1-\lambda)^3)V_1 + 6\lambda^2 (1-\lambda)^2V_2 \le 
	16(\lambda^3(1-\lambda)+\lambda(1-\lambda)^3)  + 36\lambda^2 (1-\lambda)^2. \]
	If we find $a,b\ge 0$ such that
	 \[ (\lambda^3(1-\lambda)+\lambda(1-\lambda)^3)a + 8b = 4(\lambda^3(1-\lambda)+\lambda(1-\lambda)^3)\] and 
	 \[  \lambda^2 (1-\lambda)^2a + 6b = 6\lambda^2 (1-\lambda)^2\] then by summing the two inequalities with these coefficients, we shall get the needed inequality.
	 
	 We thus should check whether the following system of equations has a non-negative solution in $a,b$:
	 \[ \left(\begin{array}{ll}
	 (\lambda^3(1-\lambda)+\lambda(1-\lambda)^3 &	8  \\ 
	 \lambda^2 (1-\lambda)^2	 & 6 \\ 
	 \end{array} \right)\left(\begin{array}{l}
	 a  \\ 
	 b\\ 
	 \end{array}\right) = \left(\begin{array}{l}
	 4(\lambda^3(1-\lambda)+\lambda(1-\lambda)^3) \\ 
	 6\lambda^2 (1-\lambda)^2 \\ 
	 \end{array}\right) . \]
	 
	The determinant of the matrix of coefficients is positive: 
	\begin{eqnarray*} 6(\lambda^3(1-\lambda)+\lambda(1-\lambda)^3)-8\lambda^2 (1-\lambda)^2  = 
	\\
	2\lambda (1-\lambda)  [3(\lambda^2 + (1-\lambda)^2)-4\lambda(1-\lambda)] = \\
	 	2\lambda (1-\lambda)  [3(1-2\lambda)^2 +2 \lambda(1-\lambda)]  \ge 0 
	\end{eqnarray*}
	 We invert it to get, up to a positive multiple, that
 \begin{eqnarray*} \left(\begin{array}{l}
 a  \\ 
 b\\ 
 \end{array}\right) & =& c \left(\begin{array}{ll}
  6  &	-8  \\ 
 -\lambda^2 (1-\lambda)^2	 & (\lambda^3(1-\lambda)+\lambda(1-\lambda)^3 \\ 
 \end{array} \right) \left(\begin{array}{l}
 4(\lambda^3(1-\lambda)+\lambda(1-\lambda)^3) \\ 
 6\lambda^2 (1-\lambda)^2 \\ 
 \end{array}\right) \\
 & = &  c       \left(\begin{array}{l}
 	24(\lambda^3(1-\lambda)+\lambda(1-\lambda)^3) - 48\lambda^2 (1-\lambda)^2  \\ 
 	 2(\lambda^3(1-\lambda)+\lambda(1-\lambda)^3)\lambda^2 (1-\lambda)^2  \\ 
 \end{array}\right)    \\
 & = &  c       \left(\begin{array}{l}
 	24\lambda (1-\lambda) (1-2\lambda)^2   \\ 
 	2(\lambda^3(1-\lambda)+\lambda(1-\lambda)^3)\lambda^2 (1-\lambda)^2  \\ 
 \end{array}\right)  .  \end{eqnarray*}
 We see that indeed the resulting $a,b$ are non-negative.

 For $n=5$ we do the same, namely we have $V_0 = V_5 = 1$ and $V_1= V_4$ and $V_2 = V_3$ so we just have two unknowns, for which we know that 
 	\[ 5V_1+10 V_2 \le 25+100 \] 
 	and that for any $\lambda \in [0,1]$ we have 
 	\[ (\lambda^4(1-\lambda)+\lambda(1-\lambda)^4)V_1 + (\lambda^2 (1-\lambda)^3+ \lambda^3(1-\lambda)^2)V_2 \le 
 	5(\lambda^4(1-\lambda)+\lambda(1-\lambda)^4)  + 10(\lambda^2 (1-\lambda)^3+ \lambda^3(1-\lambda)^2). \]
 	We need to prove that 
 	\[ 5(\lambda^4(1-\lambda)+\lambda(1-\lambda)^4)V_1 + 10 (\lambda^2 (1-\lambda)^3 + \lambda^3 (1-\lambda)^2)V_2 \le 
 	25(\lambda^4(1-\lambda)+\lambda(1-\lambda)^4) + 100 (\lambda^2 (1-\lambda)^3 + \lambda^3 (1-\lambda)^2). \]
 We are thus looking for a non-negative solution to the equation 
 \[ \left(\begin{array}{ll}
 (\lambda^4(1-\lambda)+\lambda(1-\lambda)^4)  &	5   \\ 
 	(\lambda^2 (1-\lambda)^3 + \lambda^3 (1-\lambda)^2) &  10 \\ 
 \end{array} \right)\left(\begin{array}{l}
 a  \\ 
 b\\ 
 \end{array}\right) = \left(\begin{array}{l}
5 (\lambda^4(1-\lambda)+\lambda(1-\lambda)^4)  \\ 
10(\lambda^2 (1-\lambda)^3 + \lambda^3 (1-\lambda)^2) \\ 
 \end{array}\right) . \]
 The determinant is positive since the left hand column is decreasing and the right hand column increasing. Up to a positive constant $c$ we thus have 
 
  \[ \left(\begin{array}{l}
  a  \\ 
  b\\ 
  \end{array}\right) = c \left(\begin{array}{ll}
  10   &	-5   \\ 
 - (\lambda^2 (1-\lambda)^3 + \lambda^3 (1-\lambda)^2) & (\lambda^4(1-\lambda)+\lambda(1-\lambda)^4) \\ 
  \end{array} \right) \left(\begin{array}{l}
  5 (\lambda^4(1-\lambda)+\lambda(1-\lambda)^4)  \\ 
  10(\lambda^2 (1-\lambda)^3 + \lambda^3 (1-\lambda)^2) \\ 
  \end{array}\right) . \]
 Multiplying we see that the solution is non-negative. (We use that $(\lambda^j (1-\lambda)^{n-j} + \lambda^{n-j} (1-\lambda)^j)$ is decreasing in $j\in \{0,1,\ldots, n/2\}$, an easy fact to check.)
\end{proof}

	We end this note with a simple geometric proof of the following inequality from \cite{ArtsteinEinhornFlorentinOstrover} (which reappeared independently in \cite{AGJV}) 
	\begin{theorem} \label{thm:strange} Let  $K,L \subset {\mathbb R}^n$ be convex bodies which include the origin. Then 
		$$
		\vol (\conv(K \cup - L) ) \,
		\vol ((K^\circ +  L^\circ)^\circ ) \le
		\vol(K) \, \vol(L).$$
	\end{theorem}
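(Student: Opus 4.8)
The plan is to reduce the inequality to a one-line volume-preserving substitution after rewriting all three volumes through the standard identity $\int_{\RR^n} e^{-\|x\|_M}\,dx = n!\,\vol(M)$, valid for any convex body $M\ni 0$ with gauge $\|x\|_M=\inf\{t>0:x\in tM\}$. First I would record the gauges of the two bodies on the left-hand side. Writing $f=\|\cdot\|_K$ and $g=\|\cdot\|_L$ (both sublinear, since $K,L\ni0$), the body $(K^\circ+L^\circ)^\circ$ has gauge exactly $h_{K^\circ}+h_{L^\circ}=f+g$, while $\conv(K\cup -L)=(K^\circ\cap(-L^\circ))^\circ$ has gauge equal to the inf-convolution $h(x)=\inf_{p-q=x}\bigl(f(p)+g(q)\bigr)$. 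Applying the integral identity to each factor turns the claim $\vol(\conv(K\cup-L))\,\vol((K^\circ+L^\circ)^\circ)\le\vol(K)\vol(L)$ into the purely analytic statement
\[
\Bigl(\int_{\RR^n} e^{-h(x)}\,dx\Bigr)\Bigl(\int_{\RR^n}e^{-f(z)-g(z)}\,dz\Bigr)\le\Bigl(\int_{\RR^n}e^{-f}\Bigr)\Bigl(\int_{\RR^n}e^{-g}\Bigr).
\]

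Next I would prove this by exhibiting a measure-preserving injection of $\RR^n\times\RR^n$ into itself that dominates the integrands. For each $x$ let $w(x)$ be a minimizer realizing $h(x)=f(x+w(x))+g(w(x))$, and set $T(x,z)=(p,q)$ with $p=x+w(x)+z$ and $q=w(x)+z$, so that $p-q=x$. Sublinearity of the gauges gives the pointwise bound $f(p)+g(q)=f\bigl((x+w(x))+z\bigr)+g\bigl(w(x)+z\bigr)\le f(x+w(x))+g(w(x))+f(z)+g(z)=h(x)+f(z)+g(z)$, hence $e^{-h(x)-f(z)-g(z)}\le e^{-f(p)-g(q)}$. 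The map $T$ is injective, since its inverse reads $x=p-q,\ z=q-w(p-q)$, and its differential has the block form $\left(\begin{smallmatrix}\Id+Dw & \Id\\ Dw & \Id\end{smallmatrix}\right)$, whose determinant is $\det(\Id+Dw-Dw)=1$. Changing variables with this unit Jacobian and then discarding the (nonnegative) integrand off the image of $T$ converts the pointwise bound into the displayed integral inequality, completing the argument.

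The step I expect to be the real obstacle is the regularity needed to justify this change of variables: $w(x)$ is only an a priori non-unique, possibly non-smooth argmin, so the existence of $Dw$ and the use of the area formula must be secured. I would handle this by first proving the inequality for smooth, strictly convex $K,L$, where $h$ is $C^1$, $w$ is a single-valued locally Lipschitz selection, and $T$ is a bi-Lipschitz injection with a.e.\ Jacobian $1$, and then passing to arbitrary convex bodies by Hausdorff approximation, using continuity of volume. As a sanity check, for $K=L$ a Euclidean ball one has $w\equiv0$, $h=f$, $T(x,z)=(x+z,z)$, and the inequality collapses to the obvious $\int e^{-2\|z\|}\,dz\le\int e^{-\|z\|}\,dz$; the general mechanism is exactly this, with the shear $z\mapsto z+w(x)$ accounting for the asymmetry between $K$ and $L$.
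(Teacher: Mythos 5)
Your route is genuinely different from the paper's. The paper forms the body $C=\conv\bigl(K\times\{0\}\cup\{0\}\times L\bigr)\subset\RR^{2n}$, computes $\vol(C)=\vol(K)\vol(L)/\binom{2n}{n}$, identifies the section of $C$ by the diagonal with $(K^\circ+L^\circ)^\circ$ and its projection onto the antidiagonal with $\conv(K\cup-L)$ (up to dilations whose factors cancel), and then applies the Rogers--Shephard section--projection lemma. You instead pass to gauges via $\int_{\RR^n}e^{-\|x\|_M}dx=n!\,\vol(M)$ and run a transport argument. Your reductions are all correct: the gauge of $(K^\circ+L^\circ)^\circ$ is $f+g$; the gauge of $\conv(K\cup-L)=(K^\circ\cap(-L^\circ))^\circ$ is the inf-convolution $h(x)=\min_{p-q=x}(f(p)+g(q))$, which is exact and finite because $K^\circ\cap(-L^\circ)$ contains a neighbourhood of the origin; and the subadditivity bound $f(x+w+z)+g(w+z)\le h(x)+f(z)+g(z)$ for a minimizer $w=w(x)$ is right, as is the formal unit-Jacobian computation.

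The gap is exactly where you flagged it, and your proposed repair does not close it: smoothness and strict convexity do \emph{not} make the argmin single-valued. Your own sanity check is the counterexample: for $K=L$ the Euclidean ball, $\min_q(|x+q|+|q|)=|x|$ is attained on the entire segment $\{-tx:t\in[0,1]\}$, so $w$ is a non-canonical selection from a set-valued map that can jump with $x$, and neither single-valuedness nor Lipschitz regularity of $T$ is available, so the area formula in $\RR^{2n}$ is not justified as written. Fortunately the global change of variables is unnecessary: apply Fubini first,
\[
\int_{\RR^n} e^{-f}\int_{\RR^n} e^{-g}=\int_{\RR^n}\Bigl(\int_{\RR^n}e^{-f(x+q)-g(q)}\,dq\Bigr)dx,
\]
and for each \emph{fixed} $x$ choose any minimizer $q_0=q_0(x)$ and translate the inner variable, $q=q_0+z$:
\[
\int_{\RR^n}e^{-f(x+q)-g(q)}\,dq=e^{-h(x)}\int_{\RR^n}e^{-\left(f(x+q_0+z)+g(q_0+z)-h(x)\right)}\,dz\ \ge\ e^{-h(x)}\int_{\RR^n}e^{-f(z)-g(z)}\,dz .
\]
A translation is measure preserving regardless of how $q_0(x)$ is chosen, and the outer integrand $x\mapsto\int e^{-f(x+q)-g(q)}dq$ is continuous, so no measurable or Lipschitz selection is ever needed. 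Integrating in $x$ gives precisely your displayed integral inequality, and with this restructuring your argument is complete and stands as a clean functional counterpart to the paper's geometric proof.
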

	We remark that this inequality can be thought of as a dual to the Milman-Pajor inequality \cite{MilmanPajor} stating that when $K$ and $L$ have center of mass at the origin one has 
	\[ \vol (\conv(K \cap - L) ) \,
			\vol (K +  L) \ge
			\vol(K) \, \vol(L).\]

	\begin{proof}[Simple geometric proof of Theorem \ref{thm:strange}]
	Consider two convex bodies $K$ and $L$ in $\RR^n$ and build the body iin $\RR^{2n}$ which is 
	\[ C = \conv (K\times \{0\}\cup \{0\}\times L)\]
	The volume of $C$ is  simply 
	\[ \vol(C) = \vol(K) \vol(L) \frac{1}{\binom{2n}{n}}. \]
	Let us look at the two orthogonal subspaces of $\RR^{2n}$ of dimension $n$ given by $E = \{ (x,x): x\in \RR^n\}$ and $E^{\perp} = \{ (y,-y): y \in \RR^{n}\}$. First we compute $C\cap E$: 
	
	\[ C\cap E = \{ (x,x): x = \lambda y, x=(1-\lambda)z,\lambda \in [0,1], y\in K, z\in L\}. \] In other words, 
	\[ C\cap E = \{ (x,x): x\in \cup_{\lambda\in [0,1]}( \lambda K \cap (1-\lambda)L)\}= 
	 \{ (x,x): x\in (K^{\circ}+L^{\circ})^{\circ}\}.
	\] 
	Next let us calculate the projection of $C$ onto $E^{\perp}$: 
	Since $C$ is a convex hull, we may project $K\times \{0\}$ and $\{0\}\times L$ onto $E^{\perp}$ and then take a convex hull. In other words we are searching for all $(x,-x)$ such that there exists $(y,y)$ with $(x+y, -x+y)$ in  $K\times \{0\}$ or 
	$\{0\}\times L$. Clearly this means that $y$ is either $x$, in the first case, or $-x$, in the second, which means we get 
	\[P_{E^{\perp}}C = \conv \{ (x,-x): 2x\in K {\rm ~or~}
	-2x \in L\} = \{ (x,-x): x\in \conv (K/2 \cup -L/2) \}.\]

	In terms of volume we get that 
	\[ \vol_n(C\cap E) = \sqrt{2}^n \vol_n((K^{\circ}+L^{\circ})^{\circ}) \] and 
	\[ \vol_n (P_{E^{\perp}}(C)) = \sqrt{2}^{-n} \vol_n (\conv (K \cup -L))\]
	and so their product is precisely the quantity in the right hand side of Theorem \ref{thm:strange}, and by the Rogers Shephard inequality for sections and projections, Lemma \ref{lem-RSsec-proj}, we know that
	\[ \vol_n(C\cap E)\vol_n (P_{E^{\perp}}(C))
	\le \vol_n(C)\binom{2n}{n}. \]
	Plugging in the volume of $C$, we get our inequality from Theorem \ref{thm:strange}.\end{proof}
	
	\begin{remark}
	Note that taking, for example, $K = L$ in the last construction, but taking $E_\lambda = \{(\lambda x, (1-\lambda)x): x\in \RR^n\}$, we  get that 
	\[ C\cap E = \{ (\lambda x, (1-\lambda)x): x\in K\}\] and 
	\[ P_{E_\lambda^\perp} C = \{ ((1-\lambda)x, -\lambda x): x \in \frac{1}{\lambda^2 + (1-\lambda)^2} \conv((1-\lambda)K \cup -\lambda K)
	\}. \]
	In particular, the product of their volumes, which is simply 
	\[ \vol( \conv((1-\lambda)K \cup -\lambda K))\vol(K)\]
	is bounded by $\binom{2n}{n}\vol(C)$ which is itself $\vol(K)$, giving yet another proof of the following inequality from \cite{ArtsteinEinhornFlorentinOstrover}, valid for a convex body $K$ such taht $0\in K$
	   \[\conv((1-\lambda)K \cup -\lambda K) \le \vol(K),
	\]
	 and more importantly a realization of all these sets as projections of a certain body. 
	\end{remark}

{\small
\noindent Shiri Artstein-Avidan 
\noindent School of Mathematical Science, Tel Aviv University, Ramat
Aviv, Tel Aviv, 69978, Israel.\vskip 2pt
\noindent Email address: shiri@post.tau.ac.il}
 
\end{document}